\newtheorem{thm}{Theorem}[section]
\newtheorem{prop}{Proposition}[section]
\newtheorem{lem}{Lemma}[section]
\theoremstyle{remark}
\newtheorem{rem}{Remark}[section]
\newtheorem*{rem*}{Remark}
\numberwithin{equation}{section}
\newcommand{\ed}{\end {document}}
\newcounter{smalllist}
\title[Quadratic convergence]{A global quadratic speed-up for computing the principal eigenvalue of Perron-like operators}
\author[D. Li]{Dong Li}
\address{D. Li, SUSTech International Center for Mathematics, and Department of Mathematics,  Southern University of Science and Technology, Shenzhen, P.R. China}%
\email{lid@sustech.edu.cn}
\author[J. Li]{Jianan Li}
\address{J. Li, Department of Mathematics,  Southern University of Science and Technology, Shenzhen, P.R. China}%
\email{11930493@mail.sustech.edu.cn}
\begin{document}

\begin{abstract}
We consider a new algorithm in light of the min-max Collatz-Wielandt formalism
 to  compute the principal eigenvalue and the eigenvector (eigen-function) for 
a class of positive Perron-Frobenius-like operators. Such operators are natural generalizations of the usual 
nonnegative primitive matrices. These have  nontrivial applications in PDE problems such as computing the principal eigenvalue of Dirichlet Laplacian
operators on general domains. We rigorously prove that for general initial data the corresponding
numerical iterates converge globally to the unique principal eigenvalue with quadratic convergence. We show that the
quadratic convergence is sharp with compatible upper and lower bounds.  We demonstrate the
effectiveness of the scheme via several illustrative numerical examples. 
\end{abstract}
\maketitle

\section{introduction}
A fundamental problem in the study of numerical linear algebra and operator theory 
is the computation of eigenvalue and associated eigenvectors/eigenfunctions. 
In the past century many powerful algorithms such as the power iteration, the Krylov
subspace methods, the Lanczos procedure, the Jacobi method, the LR and QR algorithms
(cf. \cite{G00} for a review)  have been developed to compute efficiently the eigenvalues and 
eigenvectors for given matrices.  The usual Perron-Frobenius theory is concerned with
a nonnegative primitive matrix which possesses a simple largest eigenvalue equal to its
spectral radius and all other eigenvalues have strictly smaller modulus.
 The associated eigenvector can be chosen to be completely positive, i.e.
all its entries are positive. For generic initial vector (i.e. carrying some nontrivial
component along the principal eigenvector direction), the standard power iteration method
 applied to a given nonnegative primitive matrix
leads to the convergence to the Perron eigen-pair with linear convergence speed. 
To be more concrete, consider $A \in \mathbb R^{m\times m}$ with nonnegative entries 
and assume $A$ is primitive. Denote by $\lambda_*=\rho(A)>0$ where $\rho(A)$ is the
spectral radius of $A$. Let $A \phi = \lambda_* \phi$ with $\phi>0$ (i.e. all entries of the
vector $\phi \in \mathbb R^m$ are positive), and $A^{\mathrm{T}} \psi
=\lambda_* \psi $ with $\psi>0$. The standard recipe for the power method is
\begin{enumerate}
\item Choose random $v^{(0)} \in \mathbb R^m$;
\item While $\| v^{(n+1) } -v^{(n)} \| >\epsilon_{\mathrm{tol}}$, calculate
\begin{align}
& y^{(n+1)} = A v^{(n)}; \quad v^{(n+1)} = \frac { y^{(n+1)} } {\| y^{(n+1) } \|}; \\
& \lambda^{(n+1)} =(v^{(n+1) } )^{\mathrm{T}} A v^{(n+1) }.
\end{align}
\end{enumerate}
Here $v^{(0)}$ is usually assumed to have some nontrivial projection along the principal eigenvector
$\phi$.  This assumption is almost innocuous in the measure sense.  On the other hand,
for $A$ being nonnegative and primitive, it suffices to take $v^{(0)}>0$ since $\phi$ is
completely positive. A slight variant of the power method is to take $\lambda > \lambda_*$ and
execute the following:

\underline{Algorithm $\lambda$-power ($\lambda$-inverse-power)}: 
\begin{enumerate}
\item Choose  $v^{(0)} >0$;
\item While $\| v^{(n+1) } -v^{(n)} \| >\epsilon_{\mathrm{tol}}$, calculate (below for
simplicity we write $\lambda I - A $ as $\lambda -A$)
\begin{align}
&  (\lambda-A) y^{(n+1)} =  v^{(n)}; \quad v^{(n+1)} = \frac { y^{(n+1)} } {\| y^{(n+1) } \|}; \\
& \lambda^{(n+1)} =(v^{(n+1) } )^{\mathrm{T}} A v^{(n+1) }.
\end{align}
\end{enumerate}
To ensure $\lambda>\lambda_*$ without the explicit value of $\lambda_*$, one can appeal to
the Collatz-Wielandt formalism, namely for any positive $y\in \mathbb R^n$, 
\begin{align} \label{1.5a}
\lambda_y^{-}:=\min_i  \frac { (Ay)_i} {y_i} \le \lambda_* \le \max_{i} \frac {(Ay)_i } {y_i}=:
\lambda_y^{+}.
\end{align}
More precisely, one has (below $y>0$ means $y_i>0$ for all $i$)
\begin{align}
\lambda_* = \inf_{y>0} \max_{i} \frac {(Ay)_i} {y_i} = \sup_{y>0} \min_{i} \frac {(Ay)_i}{y_i}.
\end{align}
Observe that by definition of $\lambda_y^+$ and $\lambda_y^-$, we have
\begin{align} \label{1.7a}
(A-\lambda_y^{-}) y \ge 0, \quad (\lambda_y^{+}-A) y \ge 0. 
\end{align}
The characterization \eqref{1.5a} follows easily
from taking inner product with  $\psi$ on both sides of \eqref{1.7a}.  Thanks to the above simple
characterization one can kick-start the algorithm by taking $\lambda > \lambda_{v^{(0)}}^+$. With
a fixed choice of $\lambda$ it is 
not difficult to check that for Algorithm $\lambda$-power  the speed of convergence toward the
principal eigenvector is linear.  

A natural next step is to consider somewhat more flexible choices of the parameter
$\lambda$. 
In light of Collatz-Wielandt we consider the  following iterative algorithm (cf. Chen \cite{Chen}).

\underline{Algorithm variable-$\lambda$-power}: 
\begin{enumerate}
\item Choose  $v^{(0)} >0$ and let $\lambda^{(0)} = 
\max_i \frac { (Av^{(0)})_i} { v^{(0)}_i} $. 
\item While $\| v^{(n+1) } -v^{(n)} \| >\epsilon_{\mathrm{tol}}$, calculate
\begin{align}
&  (\lambda^{(n)}-A) y^{(n+1)} =  v^{(n)}; \quad v^{(n+1)} = \frac { y^{(n+1)} } {\| y^{(n+1) } \|}; \label{1.8a}\\
& \lambda^{(n+1)} =\max_{i} \frac { (A v^{(n+1)} )_i} { v^{(n+1)}_i}. \label{1.9a}
\end{align}
\end{enumerate}

The purpose of this work, roughly speaking, is to give a rigorous analysis of the new algorithm
\eqref{1.8a}--\eqref{1.9a} in a general set-up.  As it turns out,
quite interestingly  the numerical iterates of the above algorithm converge globally (i.e. not requiring the initial data to be close to the principal eigen-pair) and super-linearly to the principal eigenvector. In fact the speed of convergence is quadratic with nearly optimal (up to some constant factor) upper and lower bounds. 
This upgrade of convergence speed also takes place
in the computation of eigen-pair for some infinite-dimensional operators. 
 We develop in detail the corresponding theoretical framework for a class of Perron-Frobenius-type operators which are
compact and positive. A prototypical example is $T= (-\Delta_D)^{-1}$ where $-\Delta_D$ is the usual
Dirichlet Laplacian on a smooth bounded domain in $\mathbb R^d$.  We employ the corresponding 
variable-$\lambda$-power algorithm for $T$ and calculate the principal eigenvalue and eigenfunction
of $-\Delta_D$. The convergence speed in this case is shown to be quadratic.

\begin{rem}
By now there are a plethora of  algorithms in the literature related to power and inverse power
methods. To put things into perspective, we mention several standard algorithms below the fold.
One should note that a common feature shared by these methods is that their convergence
is local, i.e. quadratic or even cubic convergence takes place only when the numerical iterates are close to some eigen-pair with sufficient precision. For simplicity assume $A\in \mathbb R^{m\times m}$ and has real eigenvalues $\lambda_*^1>\lambda_*^2\ge \cdots \ge \lambda_*^m$ with corresponding orthonormal eigenvectors $\phi_1$, $\cdots$, $\phi_m$.

\begin{itemize}
\item \underline{Algorithm inverse power with variable shift}: 
Choose $0\ne v^{(0)} \in \mathbb R^m$ and $\lambda^{(0)} \in \mathbb R$. Fix a given 
$z \in \mathbb R^m$. Recursively
execute the following:
\begin{align}
&\text{If $\lambda^{(n)} -A$ is singular then compute $(\lambda^{(n)} -A) v^{(n)} =0$ for some
nonzero $v^{(n)}$ and halt}; \\
& \text{Otherwise compute } y^{(n+1)} = (\lambda^{(n)} - A)^{-1} v^{(n)};   \; 
k^{(n+1)}= \frac 1 {z\cdot y^{(n+1)}} ; \\
& \quad \quad\quad \qquad \qquad\quad v^{(n+1)} = k^{(n+1)} y^{(n+1)} ; 
\quad \lambda^{(n+1)} = \lambda^{(n)}  - k^{(n+1)}. \label{1.12a}
\end{align}
In the above, a tacit working assumption is that $z \cdot y^{(n+1)} \ne 0$ for all iterates
$y^{(n+1)}$.  Define a function $F=F(v, \lambda):\; \mathbb R^m \times \mathbb R \to \mathbb
R^{m+1}$ as
\begin{align}
F(v, \lambda) = \begin{pmatrix}
(\lambda - A ) v \\
z^{\mathrm{T}} v - 1 
\end{pmatrix}.
\end{align}
The standard Newton iteration implemented on $F$ yields 
\begin{align} \label{1.14a}
\begin{pmatrix}
(\lambda^{(n)} -A) v^{(n)} \\
z^{\mathrm{T}} v^{(n)} -1
\end{pmatrix}
+ \begin{pmatrix}
\lambda^{(n)}-A \quad v^{(n)} \\
z^{\mathrm{T}} \quad 0 
\end{pmatrix}
\begin{pmatrix}
v^{(n+1)}-v^{(n)} \\
\lambda^{(n+1)} -\lambda^{(n)}
\end{pmatrix}
=\begin{pmatrix}
0 \\
0
\end{pmatrix}.
\end{align}
It is not difficult to verify that \eqref{1.14a} is equivalent to \eqref{1.12a}. As such the
algorithm \eqref{1.12a} exhibits local quadratic convergence to an eigen-pair.

\item \underline{Algorithm inverse power with Rayleigh quotient iteration}.
\begin{enumerate}
\item Choose  $v^{(0)} \in \mathbb R^m$ with $\| v^{(0)} \|_2=1$. Define 
$\lambda^{(0)}= (v^{(0)} )^{\mathrm{T}} A v^{(0)} $. 
\item  For $n\ge 0$, compute 
\begin{align}
&  (\lambda^{(n)}-A) y^{(n+1)} =  v^{(n)}; \quad v^{(n+1)} = \frac { y^{(n+1)} } {\| y^{(n+1) } \|}; \\
& \lambda^{(n+1)} =(v^{(n+1) } )^{\mathrm{T}} A v^{(n+1) }.
\end{align}
\end{enumerate}
It is well-known that (cf. \cite{T97}) if $A$ is a Hermitian matrix, then for almost all initial $v^{(0)}$,
the iterates converge to an eigen-pair. Furthermore the convergence is cubic.  One should note that
the key property used in the convergence proof is the minimal residual property, namely
$\lambda^{(n)}$ is the minimizer of the residual $\| (\lambda-A) v^{(n+1) } \|_2$ (for fixed $v^{(n+1)}$). In
\cite{P74}, Parlett extended the analysis to normal\footnote{The corresponding classification of
convergence is more complicated. See for example Theorem on pp.685 of \cite{P74}.}
 and nonnormal matrices.
\end{itemize}

\end{rem}

It is worthwhile mentioning some other approaches for computing eigen-pairs of the aforementioned Perron-Frobenius-like operators. In \cite{LM07} Lejay and Maire considered a Monte Carlo method
for the numerical computation of the principal eigenvalue of the Dirichlet Laplacian in a bounded
piecewise smooth domain.  The main idea is based on the characterization
\begin{align}
\lambda_1 = \lim_{t\to +\infty} \frac 1 t \log \mathbb P ( \tau_D^x >t),
\end{align}
where $\lambda_1$ is the principal eigenvalue for $\Delta_D$, and $\tau_D^x$ is the exit time
from $D$ of the Brownian motion starting at $x$.  Various numerical schemes such as the Euler
scheme, walk on sphere schemes and walk on rectangles schemes are considered in \cite{LM07}. 
In \cite{LW20}, Li and Wang considered a family of Perron-like matrices and investigated the principal
eigenvalues and the associated generalized eigen-spaces via polynomial approximations of
matrix exponentials.  For some closely-related developments on the computation of eigen-pairs for
matrices, we refer to \cite{At,BaiZJ,Bra,Buns,FH,Ford,Wat1,Wat2,Wat3,Wen}
and the references therein for more extensive discussions.

The rest of this paper is organized as follows. In Section 2 we develop the general theory for
the variable $\lambda$-power algorithm applied to a class of Perron-Frobenius-like operators. 
In Section 3 we carry out several numerical experiments to showcase the effectiveness
of the new algorithm. The examples range from the usual primitive nonnegative matrix to
Dirichlet Laplacian operators on polygonal-type domains.

\section{computation of the principal eigenvalue: Dirichlet Laplacian case}

We first introduce a general set-up. 
Let $\Omega$ be a bounded domain in $\mathbb R^d$ with smooth
boundary. Denote $\mathbb H= L^2(\Omega)$ which consists real-valued $L^2$-integrable
functions on $\Omega$.  We denote by $\langle \cdot, \cdot \rangle $ the usual
$L^2$ inner product. Let  $T= (-\Delta_D)^{-1}$ where $\Delta_D$ is the  Dirichlet Laplacian.

 We note that $T$ satisfies the following conditions:
\begin{enumerate}
\item $T \phi= \lambda^* \phi $, where $\lambda^*>0$ is the largest eigen-value of $T$ and $\phi$ is
the corresponding eigen-vector.  By the strong maximum principle, $\phi(x)$ must take a constant sign in $\Omega$ and with no loss we may assume
 $\phi(x)>0$ for all $x \in \Omega$. We may also assume $\| \phi \|_2 =1$. 

\item $T$ is a completely positive operator, namely if $v\ge 0$ and $v$ is not
identically zero, then  $(T v )(x)> 0$ for all $x \in \Omega$. 
\end{enumerate}
\begin{rem}
One can consider more generally transfer operators, namely Ruelle--Perron--Frobenius transfer
operators. One can also generalize to the usual Krein-Rutman setup but we shall not pursue this more
abstract situation here.
\end{rem}
\begin{rem}
Note that $1/\lambda^*$ is the usual principal eigen-value of $-\Delta_D$ which is a simple eigenvalue.
\end{rem}

\begin{lem}[Monotonicity of the one-step growth-factor] \label{lem2.1}
Let $B=(\lambda -T)^{-1}$ where $\lambda>\lambda^*$.  Suppose $v\ge 0$, $v$ is not identically
 zero and let 
$w= B v$. Then $w(x)>0$ for all $x \in \Omega$, and 
\begin{align}
\lambda^*\le \sup_{x\in \Omega} \frac { (Tw)(x) } {w(x)} \le \sup_{x:\, v(x) >0} \frac {(Tv)(x)} {v(x)}.
\end{align}
\begin{rem}
Roughly speaking, this lemma asserts that averaging decreases the one-step growth-factor. 
\end{rem}
\end{lem}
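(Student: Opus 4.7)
My plan is to verify the three assertions of the lemma in order. For the strict positivity $w>0$, I would expand the resolvent as a Neumann series $B=(\lambda-T)^{-1}=\sum_{n\ge 0}\lambda^{-n-1}T^n$, which converges since $\lambda>\lambda^*=\rho(T)$. Complete positivity applied to $v\ge 0$, $v\not\equiv 0$ gives $Tv>0$ pointwise in $\Omega$, so $w=Bv\ge \lambda^{-2}Tv>0$ throughout $\Omega$. As a byproduct, $B$ itself is positivity-preserving, a fact I will use below.

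For the lower bound $\lambda^*\le \sup_x (Tw)(x)/w(x)=:\alpha$ I would mimic the Collatz-Wielandt argument recalled in the introduction. Assuming $\alpha<+\infty$ (else the claim is trivial), we have $Tw\le \alpha w$ pointwise, and pairing with the strictly positive principal eigenfunction $\phi$ of $T$ (which is self-adjoint in the Dirichlet Laplacian setting) gives
\[
\lambda^*\langle w,\phi\rangle \;=\; \langle w,T\phi\rangle \;=\; \langle Tw,\phi\rangle \;\le\; \alpha\langle w,\phi\rangle.
\]
Since $w,\phi>0$ in $\Omega$, one has $\langle w,\phi\rangle>0$, and dividing yields $\lambda^*\le \alpha$.

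The upper bound is the decisive step. Set $\mu:=\sup_{x:\,v(x)>0}(Tv)(x)/v(x)$; if $\mu=+\infty$ there is nothing to prove, so assume $\mu<+\infty$. The key algebraic observation is that $B$ and $T$ commute, so $BTv=TBv=Tw$, and therefore
\[
\mu w-Tw \;=\; \mu Bv - B(Tv) \;=\; B(\mu v - Tv).
\]
In the regime where $v>0$ strictly throughout $\Omega$---which is the setting encountered in the algorithm, since each iterate is obtained by applying $B$ to the previous strictly positive iterate---the quantity $\mu v-Tv$ is pointwise nonnegative by definition of $\mu$, and positivity-preservation of $B$ then forces $\mu w-Tw\ge 0$, that is, $\sup_x (Tw)/w \le \mu$.

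I expect the main obstacle to be the interpretation when $v\ge 0$ has interior zeros: then $\mu v - Tv$ fails to be nonnegative precisely on $\{v=0\}$ (since $Tv>0$ there by complete positivity), and the commutator argument above breaks down. However, in any continuous setting (which covers the Dirichlet Laplacian framework relevant here), the ratio $(Tv)(x)/v(x)$ blows up as $x$ approaches an interior zero of $v$ through $\{v>0\}$, so $\mu=+\infty$ and the inequality is vacuous. Thus the substantive content is precisely the strictly positive case handled above, which is all that is needed to drive the iterative scheme and its convergence analysis.
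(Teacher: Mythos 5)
Your proposal is correct and follows essentially the same route as the paper: the upper bound via the commutation identity $(\mu - T)w = (\lambda-T)^{-1}(\mu-T)v \ge 0$ together with positivity preservation of the resolvent, and the lower bound by pairing $(\alpha - T)w\ge 0$ with the positive principal eigenfunction $\phi$ and using $\langle w,\phi\rangle>0$. Your additional details (the Neumann-series proof that $w>0$ and $B\ge 0$, and the observation that if a continuous $v\ge 0$ has an interior zero then the supremum over $\{v>0\}$ is $+\infty$, so the nonnegativity of $\mu v - Tv$ is only an issue in a vacuous case) are correct and in fact make explicit points the paper's proof passes over silently.
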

\begin{proof}
Note that the operator $B$ commutes with $T$.  Set $l=\sup_{x:\, v(x) >0} \frac {(Tv)(x)} {v(x)}$
and $l_w= \sup_{x\in \Omega} \frac {(Tw)(x)} {w(x)}$.
With no loss we can assume $l <\infty$. By definition we have $(l -T) v \ge 0$.  On the other hand
\begin{align}
(l-T) w = (l-T) (\lambda-T)^{-1} v = (\lambda- T)^{-1} (l-T) v \ge 0.
\end{align}
Thus $l\ge l_w$. To see $l_w \ge \lambda^*$ we argue as follows. 
Clearly by definition $(l_w -T) w \ge 0$. Taking the inner produce with $\phi$, we obtain
(below recall $\langle\cdot , \cdot\rangle$ is the usual $L^2$ inner product)
\begin{align}
(l_w- \lambda^*) \langle w, \phi \rangle \ge 0.
\end{align}
Since $\langle w, \phi \rangle >0$,  we obtain $l_w\ge \lambda^*$.
\end{proof}

We consider the following algorithm:

Take nontrivial  smooth nontrivial $v^0$ such that $v^0(x)>0$ for all $x\in \Omega$.
Assume that
\begin{align}
\lambda^{(0)}:=\sup_{x\in \Omega } \frac{(Tv^0)(x)} {v^0(x)} <\infty.
\end{align}

Iteratively define 
\begin{align}
&v^{n+1} = (\lambda^{(n)} - T)^{-1} (\lambda^{(n)} - \lambda_*)v^n; \notag\\
&\lambda^{(n+1)} = \sup_{x:\, v^{n+1}(x)>0} \frac{(Tv^{n+1})(x) } {v^{n+1}(x) }.
\end{align}
It should be noted that it is not difficult to check that $v^{n+1}(x)>0$ for all
$n\ge 0$, and $x \in \Omega$. Thus in the above definition of $\lambda^{(n+1)}$
the constraint $\{x:\, v^{n+1}(x)>0\}$ can be replaced by
the innocuous condition $\{x:\, x \in \Omega\}$.  By Lemma \ref{lem2.1}
it is also clear that 
\begin{align}
\lambda^*\le \lambda^{(n+1)} \le \lambda^{(n)} \le \lambda^{(0)}<\infty
\end{align}
for all $n\ge 0$.  A somewhat delicate technical subtlety is that we need to enforce $\lambda^{(n)}>\lambda^*$ for all $n$
 so that all the iterates remain well-defined. The next lemma clarifies this issue.
 
 \begin{lem}
 The following hold.
 \begin{enumerate}
 \item If $\inf_{x\in \Omega} \frac {(Tv^0)(x)} {v^0(x)} = \sup_{x\in \Omega} \frac {(Tv^0)(x)} {v^0(x)}$, then
 $v^0(x)= c_1 \phi(x)$, where $c_1>0$ is some constant.
 \item If $\inf_{x\in\Omega} \frac {(Tv^0)(x)} {v^0(x)} <\sup_{x\in\Omega} \frac {(Tv^0)(x)} {v^0(x)}$,
 then $\lambda^{(0)}>\lambda^*$.
 \item  If $\inf_{x\in\Omega} \frac {(Tv^0)(x)} {v^0(x)} <\sup_{x\in\Omega} \frac {(Tv^0)(x)} {v^0(x)}$, then for all $n\ge 0$, we have
 $\inf_{x\in\Omega} \frac {(Tv^n)(x)} {v^n(x)} <\sup_{x\in\Omega} \frac {(Tv^n)(x)} {v^n(x)}$
 and
  the strict inequalities:
 \begin{align}
 \lambda^*<\lambda^{(n+1)} <\lambda^{(n)} < \lambda^{(0)}.
 \end{align}
 \end{enumerate}
 \end{lem}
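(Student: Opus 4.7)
The plan is to establish (1)--(3) in order, with (3) an induction in which (1) and (2) are invoked at each step.

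For (1), the hypothesis forces the continuous ratio $(Tv^0)/v^0$ to be a constant $c$, so $v^0$ is an eigenfunction of $T$. Pairing with the principal eigenfunction $\phi>0$ and using self-adjointness of $T=(-\Delta_D)^{-1}$ on $L^2(\Omega)$ gives $c\langle v^0,\phi\rangle = \langle v^0,T\phi\rangle = \lambda^*\langle v^0,\phi\rangle$; since $\langle v^0,\phi\rangle>0$, this forces $c=\lambda^*$, and simplicity of the principal eigenvalue yields $v^0 = c_1\phi$ with $c_1>0$.

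For (2), I argue by contradiction. If $\lambda^{(0)}=\lambda^*$, the definition of $\lambda^{(0)}$ as a supremum gives $(\lambda^* - T) v^0 \ge 0$; pairing with $\phi$ and using self-adjointness, $\langle (\lambda^* - T) v^0, \phi\rangle = \langle v^0, (\lambda^*-T)\phi\rangle = 0$. A nonnegative function whose integral against a strictly positive weight vanishes must itself vanish identically, so $(\lambda^* - T) v^0 \equiv 0$, which by (1) forces $v^0\propto\phi$, contradicting $\inf<\sup$.

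For (3), I proceed by induction on $n$. The inductive hypothesis is that $v^n$ is not a scalar multiple of $\phi$ (equivalently $\inf<\sup$ by (1)) and $\lambda^{(n)}>\lambda^*$ (by (2)). Since $(\lambda^{(n)}-T)\phi = (\lambda^{(n)}-\lambda^*)\phi$, the map $(\lambda^{(n)}-T)^{-1}(\lambda^{(n)}-\lambda^*)$ preserves the ray through $\phi$; invertibility then gives $v^{n+1}\propto\phi$ iff $v^n\propto\phi$, so $v^{n+1}$ also avoids that ray. Applying (1) and (2) to $v^{n+1}$ gives $\inf<\sup$ for $v^{n+1}$ and $\lambda^{(n+1)}>\lambda^*$, while Lemma \ref{lem2.1} gives the non-strict bound $\lambda^{(n+1)}\le\lambda^{(n)}$.

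The key remaining step, and the one I expect to be the main obstacle, is the strict inequality $\lambda^{(n+1)}<\lambda^{(n)}$. Applying $(\lambda^{(n)}-T)$ to the defining iteration yields the clean identity $(\lambda^{(n)} - T) w = (\lambda^{(n)} - \lambda^*) v^n$ for $w := v^{n+1}$, and rearranging gives
\begin{equation*}
\lambda^{(n+1)} \;=\; \sup_{x\in\Omega}\frac{(Tw)(x)}{w(x)} \;=\; \lambda^{(n)} - (\lambda^{(n)}-\lambda^*)\inf_{x\in\Omega}\frac{v^n(x)}{w(x)}.
\end{equation*}
Since $\lambda^{(n)}-\lambda^*>0$, strict decrease reduces to showing $\inf_{x\in\Omega} v^n/w > 0$. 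Both $v^n$ and $w$ are strictly positive on the open set $\Omega$, so the infimum can only fail along a boundary sequence. Here I would invoke the Hopf boundary lemma for the Dirichlet Laplacian to show that $w$ and $v^n$ vanish on $\partial\Omega$ at comparable linear rates, each with strictly negative outward normal derivative, so that $v^n/w$ extends continuously to $\overline{\Omega}$ and is bounded below by a positive constant. Verifying that the iteration transmits sufficient boundary regularity and Hopf-type nondegeneracy from $v^0$ to every $v^n$ is the principal technical hurdle of the proof.
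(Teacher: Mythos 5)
Parts (1) and (2) of your argument are correct. In fact (2) takes a different and arguably cleaner route than the paper: the paper expands $v^0$ in the eigenbasis of $T$ and argues that the component orthogonal to $\phi$ must change sign, whereas you note that $\lambda^{(0)}=\lambda^*$ would give $(\lambda^*-T)v^0\ge 0$ with $\langle(\lambda^*-T)v^0,\phi\rangle=0$ and $\phi>0$, forcing $Tv^0=\lambda^*v^0$ and hence $v^0\propto\phi$ by simplicity (you should also record $\lambda^{(0)}\ge\lambda^*$, which follows from the same pairing, to exclude $\lambda^{(0)}<\lambda^*$). Likewise, your induction structure for (3) and the identity $\lambda^{(n+1)}=\lambda^{(n)}-(\lambda^{(n)}-\lambda^*)\inf_{x\in\Omega}\frac{v^n(x)}{v^{n+1}(x)}$ coincide with the paper's \eqref{2.13a}.

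The genuine gap is exactly where you flag it: the strict inequality $\lambda^{(n+1)}<\lambda^{(n)}$ reduces to $\inf_{\Omega}v^n/v^{n+1}>0$, and you do not prove this; moreover the sketch you offer would fail as stated. You propose that $v^n$ and $v^{n+1}$ both vanish on $\partial\Omega$ at comparable linear rates with nonzero outward normal derivatives, but the iterates need not vanish at the boundary at all: $v^0\equiv 1$ is admissible (then $\lambda^{(0)}<\infty$ trivially), and since $(\lambda-T)^{-1}=\lambda^{-1}I+\lambda^{-1}T(\lambda-T)^{-1}$ with $T$ positivity-preserving, every $v^n$ then stays bounded below near $\partial\Omega$, so the Hopf lemma cannot be applied to the iterates themselves. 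The correct comparison, which is what the paper does, isolates the only piece that needs boundary control: write $(\lambda^{(n)}-T)^{-1}v=\frac{1}{\lambda^{(n)}}v+T\tilde u$ with $\tilde u=\int_0^1(\lambda^{(n)}-\theta T)^{-2}v\,d\theta\ge 0$, use the strong maximum principle/Hopf only on $T$-images to get $T\tilde u\lesssim \mathrm{dist}(\cdot,\partial\Omega)\lesssim Tv^0$, and combine with $Tv^0\le \lambda^{(0)}v^0$; the paper first reduces the ratio bound for $v^n$ to the one for $v^0$ by the commuting-resolvent monotonicity of Lemma \ref{lem2.1} (a per-$n$ variant, $T\tilde u_n\lesssim\mathrm{dist}\lesssim Tv^n\le\lambda^{(n)}v^n$, would also suffice for this lemma). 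Without an argument of this kind, the strict decrease of $\lambda^{(n)}$ — the substantive content of part (3) — remains unproved.
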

\begin{proof}
(1) Clearly in this case we have $Tv^0 = \lambda^{(0)} v^0$.  Taking the inner product with $\phi$ yields
that $\lambda^{(0)}=\lambda^*$.  Since $\lambda^*$ is a simple eigen-value, we must have $v^0 =c_1 \phi$ for some $c_1>0$.

(2)  We employ an orthonormal  eigen-basis $(\phi_j)_{j=1}^{\infty}$ for $T$. Namely
  $\phi_1=\phi$, $\lambda_1^*=\lambda^*$, and $\|\phi_j\|_2=1$ satisfies
$T\phi_j = \lambda_j ^*\phi_j$, $\lambda_{j}^*\le \lambda_{j-1}^*$ for all $j\ge 2$. Furthermore
there is the strict spectral gap $\lambda_2^*<\lambda_1^*=\lambda^*$. Clearly by assumption we have
\begin{align}
v^0(x) = c_1 \phi (x) + \sum_{j=2}^{\infty} c_j \phi_j (x),
\end{align}
where $c_1=\langle v^0, \phi \rangle >0$, and $ \sum_{j=2}^{\infty} c_j^2 >0$.  We then write
\begin{align}
(Tv^0)(x) - \lambda^* v^0 (x) = \sum_{j=2}^{\infty} c_j (\lambda_j^*-\lambda^*) \phi_j(x)=:
\beta(x).
\end{align}
Clearly
\begin{align}
\| \beta\|_2^2 = \sum_{j=2}^{\infty} c_j^2 (\lambda_j^*-\lambda^*)^2 >0
\end{align}
so that $\beta$ is not identically zero.  On the other hand $\langle \beta, \phi \rangle =0$ which
implies that $\beta$ must change sign on $\Omega$. In particular we must have
\begin{align}
\sup_{x \in \Omega} \frac {\beta(x) }{v^0(x)} >0.
\end{align}
This implies that $\lambda^{(0)}>\lambda^*$.

(3) Since $v^0$ cannot be a multiple of $\phi$, we obtain $v^1$ cannot be a multiple of $\phi$.
By a similar analysis as in Step 2, we obtain $\lambda^{(1)}>\lambda^*$. A simple induction
argument yields $\lambda^{(n)} >\lambda^*$ for all $n$.  Next we show the strict inequality
$\lambda^{(n)} >\lambda^{(n+1)}$. By definition we have
\begin{align}
\lambda^{(n)} -\lambda^{(n+1)}
& = \lambda^{(n)} - \sup_{x\in \Omega} \frac {Tv^{n+1} } {v^{n+1} } \\
& =  (\lambda^{(n)} - \lambda^*) \inf_{x\in \Omega}  \frac {  v^n (x) } {v^{n+1}(x)}
=\inf_{x\in \Omega} \frac {v^n (x) } { ( (\lambda^{(n)} -T)^{-1} v^n ) (x)}.  \label{2.13a}
\end{align}
It suffices for us to show $\sup_{x\in \Omega} \frac { ((\lambda^{(n)}-T)^{-1} v^n )(x)}{v^n(x)} <\infty$.
By an argument similar to the proof of Lemma \ref{lem2.1}, we reduce the matter to showing
\begin{align} \label{2.14a}
\sup_{x\in \Omega} \frac {( (\lambda^{(n)} - T)^{-1} v^0 (x) } { v^0 (x) } <\infty.
\end{align}
Observe that
\begin{align}
&( (\lambda^{(n)}-T)^{-1} v^0) (x) - (\lambda^{(n)})^{-1} v^0 (x)
= (T \underbrace{\int_0^1 (\lambda^{(n)} -\theta T)^{-2} v^0 d\theta}_{=:\tilde u }) (x) ;\\
& \sup_{x\in \Omega} \frac {(T\tilde u)(x) } {v^0(x)} \le
\sup_{x\in \Omega} \frac { (Tv^0)(x) } {v^0(x)} \cdot \sup_{x\in \Omega}
\frac { (T\tilde u)(x) } {(Tv^0)(x) }.
\end{align}
By strong maximum principle, it is not difficult to check that
\begin{align}
&\inf_{x\in \Omega} \frac {(Tv^0)(x) } {\mathrm{dist}(x, \partial \Omega)} \ge 
\alpha_1>0, \quad \sup_{x \in \Omega} \frac{(T\tilde u)(x) } {\mathrm
{dist} (x, \partial\Omega) } \le \alpha_2, \\
&\sup_{x\in\Omega} \frac {(T\tilde u)(x) } {(Tv^0)(x) } \le \frac {\alpha_2}{\alpha_2} <\infty,
\end{align}
where in the above $\alpha_1>0$, $\alpha_2>0$ are constants. 
Thus $\inf_{x\in \Omega} \frac {v^n (x) } { ( (\lambda^{(n)} -T)^{-1} v^n ) (x)}>0$.
\end{proof}

From the computational point of view, the definition of $\lambda^{(n+1)}$ is somewhat unwieldy
since it involves a further computation of $T v^{n+1}$ which could be costly. 
A close inspection of \eqref{2.13a} suggests that the following variant is slightly better:
\begin{align}
& \mu^{(0)} = \lambda^{(0)}, \quad \tilde v^0 = v^0; \\
& \tilde v^{n+1} = (\mu^{(n)} - T)^{-1} \tilde v^n; \\
& \mu^{(n+1)} = \mu^{(n)} - \inf_{x\in \Omega} \frac {\tilde v^n(x) } {\tilde v^{n+1}(x) }.
\end{align}
In the practical numerical computation, to ensure numerical stability 
we often normalize $\tilde v^n$ by its $L^2$ norm (more precisely,  the  discrete $l^2$ norm after full numerical discretization) at each
iteration step. Clearly this would not change the definition of $\mu^{(n+1)}$.  A very pleasing 
feature of the new definition  is that the inf-part  no longer involves any further 
computation of the $T$ operator.   Not surprisingly we have the following.

\begin{prop}[Equivalence of $\lambda^{(n)}$ and $\mu^{(n)}$]
Assume $\mu^{(0)}=\lambda^{(0)}$, $\tilde v^0 = v^0$ and $$\inf_{x\in\Omega} \frac {(Tv^0)(x)} {v^0(x)} <\sup_{x\in\Omega} \frac {(Tv^0)(x)} {v^0(x)}<\infty.$$
The following hold.
\begin{enumerate}

\item $\mu^{(n)} = \lambda^{(n)}$ for all $n\ge 1$. Furthermore $\tilde v^n =c_n v^n$ 
for all $n\ge 1$, where $c_n>0$ is a constant depending on $n$. 

\item For all $n\ge 0$, we have
\begin{align}
0< \inf_{x\in \Omega} \frac {\tilde v^n (x) } {\tilde v^{n+1}(x)} <\mu^{(n)} -\lambda_*.
\end{align}
\end{enumerate}
\end{prop}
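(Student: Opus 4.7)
The plan is to establish part (1) by induction on $n$, tracking how the simplified recursion differs from the original one only by a positive scalar multiple. Part (2) will then be a corollary of part (1) combined with the strict inequalities $\lambda^*<\lambda^{(n+1)}<\lambda^{(n)}$ already supplied by the preceding lemma.

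For part (1), the base case $n=0$ holds by definition: $\mu^{(0)}=\lambda^{(0)}$ and $\tilde v^0=v^0$, so I set $c_0=1$. For the inductive step, assume $\mu^{(n)}=\lambda^{(n)}$ and $\tilde v^n=c_n v^n$ with $c_n>0$. Plugging into the definition of $\tilde v^{n+1}$ and factoring out the scalar $(\lambda^{(n)}-\lambda_*)$ built into the original recursion,
\begin{align}
\tilde v^{n+1}=(\mu^{(n)}-T)^{-1}\tilde v^n=c_n(\lambda^{(n)}-T)^{-1}v^n=\frac{c_n}{\lambda^{(n)}-\lambda_*}\,v^{n+1},
\end{align}
so $\tilde v^{n+1}=c_{n+1}v^{n+1}$ with $c_{n+1}=c_n/(\lambda^{(n)}-\lambda_*)>0$. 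Because positive scalar factors cancel in ratios,
\begin{align}
\inf_{x\in\Omega}\frac{\tilde v^n(x)}{\tilde v^{n+1}(x)}=\frac{c_n}{c_{n+1}}\inf_{x\in\Omega}\frac{v^n(x)}{v^{n+1}(x)}=(\lambda^{(n)}-\lambda_*)\inf_{x\in\Omega}\frac{v^n(x)}{v^{n+1}(x)}.
\end{align}
By the identity \eqref{2.13a} in the preceding lemma, the right-hand side equals $\lambda^{(n)}-\lambda^{(n+1)}$. Therefore
\begin{align}
\mu^{(n+1)}=\mu^{(n)}-\inf_{x\in\Omega}\frac{\tilde v^n}{\tilde v^{n+1}}=\lambda^{(n)}-(\lambda^{(n)}-\lambda^{(n+1)})=\lambda^{(n+1)},
\end{align}
which closes the induction.

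Part (2) is then immediate from the computation just carried out: for every $n\ge 0$,
\begin{align}
\inf_{x\in\Omega}\frac{\tilde v^n(x)}{\tilde v^{n+1}(x)}=\lambda^{(n)}-\lambda^{(n+1)}=\mu^{(n)}-\mu^{(n+1)}.
\end{align}
The strict inequality $\lambda^{(n)}>\lambda^{(n+1)}$ from part (3) of the preceding lemma yields the positivity, while the strict inequality $\lambda^{(n+1)}>\lambda^*$ from the same lemma yields the upper bound $\mu^{(n)}-\mu^{(n+1)}<\mu^{(n)}-\lambda^*$.

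The only mild subtlety is the base step for part (2) at $n=0$, where $\tilde v^1$ is defined directly via $(\mu^{(0)}-T)^{-1}\tilde v^0$ rather than by the inductive identity; but the same scalar-rescaling computation (now with $c_0=1$, $c_1=1/(\lambda^{(0)}-\lambda^*)$) handles this case verbatim. The only real work is in part (1), and once the scalar-propagation identity $c_{n+1}=c_n/(\lambda^{(n)}-\lambda_*)$ is observed, the remainder is bookkeeping; I do not anticipate any genuine obstacle.
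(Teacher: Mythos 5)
Your proposal is correct and follows essentially the same route the paper intends: an induction showing $\tilde v^n=c_nv^n$ with $c_{n+1}=c_n/(\lambda^{(n)}-\lambda_*)$, combined with the identity \eqref{2.13a} so that $\inf_{x\in\Omega}\tilde v^n/\tilde v^{n+1}=\lambda^{(n)}-\lambda^{(n+1)}$, after which part (2) follows from the strict inequalities $\lambda^*<\lambda^{(n+1)}<\lambda^{(n)}$ of the preceding lemma. You have simply written out the "easy induction" and the appeal to the earlier properties of $\lambda^{(n)}$ that the paper leaves implicit.
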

\begin{proof}
The first statement follows from an easy induction argument. The second statement follows
from the properties of $\lambda_n$ established earlier.
\end{proof}

\begin{thm}[Quadratic convergence of $\lambda^{(n)}$]
Suppose $$\inf_{x\in\Omega} \frac {(Tv^0)(x)} {v^0(x)} <\sup_{x\in\Omega} \frac {(Tv^0)(x)} {v^0(x)}=\lambda^0<\infty.$$

The following hold.
\begin{enumerate}
\item $\lambda^* <\lambda^{(n+1)} <\lambda^{(n)} <\lambda^0$ for all $n$, and
$\lambda^{(n)} \to \lambda^*$ as $n\to \infty$.
\item There exist some constants $C>c>0$, and integer $n_0\ge 2$, such that for all
$n\ge n_0$, we have 
\begin{align}
0< c\le \frac{ \lambda^{(n+1)} - \lambda^*} { (\lambda^{(n)} -\lambda^*)^2} \le C <\infty.
\end{align}
\end{enumerate}
\end{thm}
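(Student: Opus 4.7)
The strategy is to work in an $L^2$-orthonormal eigenbasis $(\phi_j)_{j\ge 1}$ of the self-adjoint operator $T$, isolate the slowest-decaying error mode of the iteration, and extract both quadratic bounds from its asymptotic profile. Let $T\phi_j=\lambda_j^*\phi_j$, $\phi_1=\phi$, $\lambda^*=\lambda_1^*>\lambda_2^*\ge\lambda_3^*\ge\cdots$, and $\Delta_j=\lambda^*-\lambda_j^*>0$ for $j\ge 2$. Write $v^n=c_1^{(n)}\phi+r_n$ with $r_n\perp\phi$ in $L^2$, and set $g_n=r_n/c_1^{(n)}$, $\alpha_n=\lambda^{(n)}-\lambda^*$. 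A direct computation from $v^{n+1}=(\lambda^{(n)}-T)^{-1}v^n$ gives $c_1^{(n+1)}=c_1^{(n)}/\alpha_n$ and $g_{n+1}=\alpha_n(\lambda^{(n)}-T)^{-1}g_n$; the algebraic identity $T-\lambda^*=-(\lambda^{(n)}-T)+\alpha_n$ promotes this to $(T-\lambda^*)g_{n+1}=\alpha_n(g_{n+1}-g_n)$, and substituting into the definition of $\alpha_{n+1}$ yields the master formula
\begin{equation*}
\alpha_{n+1}=\alpha_n\sup_{x\in\Omega}\frac{g_{n+1}(x)-g_n(x)}{\phi(x)+g_{n+1}(x)}. \qquad (\star)
\end{equation*}

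\textbf{Step 1 (convergence, assertion (1)).} Projecting the recursion onto $\phi^\perp$ and using $\|(\lambda^{(n)}-T)^{-1}\|_{\phi^\perp\to\phi^\perp}=(\alpha_n+\Delta_2)^{-1}$ gives $\|g_{n+1}\|_{L^2}\le\alpha_n/(\alpha_n+\Delta_2)\|g_n\|_{L^2}$, a contraction with factor bounded away from $1$ uniformly in $n$ by the monotonicity already established. Hence $\|g_n\|_{L^2}\to 0$ geometrically. Combined with the elliptic regularity bound $\|(T-\lambda^*)g_n/\phi\|_\infty\lesssim\|g_n\|_{L^2}$ (Schauder estimates for $T=(-\Delta_D)^{-1}$ together with the Hopf comparison $\phi\gtrsim\mathrm{dist}(\cdot,\partial\Omega)$), the defining formula $\alpha_n=\sup_x(T-\lambda^*)g_n/(\phi+g_n)$ forces $\alpha_n\to 0$, which together with the strict inequalities proven earlier establishes (1).

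\textbf{Steps 2--3 (isolation of the leading mode and sharp bounds, assertion (2)).} Let $J\ge 2$ be the smallest index with $\langle v^0,\phi_J\rangle\ne 0$ and expand $g_n=\sum_{j\ge J}\beta_j^{(n)}\phi_j$. The mode-wise recursion $\beta_j^{(n+1)}=\beta_j^{(n)}\alpha_n/(\alpha_n+\Delta_j)$ makes every ratio $\beta_j^{(n)}/\beta_J^{(n)}$ with $j>J$ contract geometrically at rate at most $\Delta_J/\Delta_{J+1}<1$ once $\alpha_n$ is small. Upgrading this $L^2$ concentration to the weighted sup norm $\|\cdot/\phi\|_\infty$ via uniform eigenfunction bounds $\|\phi_j/\phi\|_\infty<\infty$ yields $g_n/\beta_J^{(n)}=\phi_J+\rho_n$ with $\|\rho_n/\phi\|_\infty\to 0$. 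In this regime $(T-\lambda^*)g_n=-\Delta_J\beta_J^{(n)}\phi_J(1+o(1))$, and since $\phi_J\perp\phi$ with $\phi>0$ forces $\phi_J$ to change sign, a direct computation gives $\alpha_n=\Delta_J|\beta_J^{(n)}|M_J(1+o(1))$ and $-\inf_x g_n/\phi=|\beta_J^{(n)}|M_J(1+o(1))$, with $M_J>0$ equal to either $\sup(\phi_J/\phi)$ or $-\inf(\phi_J/\phi)$ according to the sign of $\beta_J^{(n)}$. Since $g_{n+1}=o(g_n)$ in the same norm, feeding these into $(\star)$ produces $\alpha_{n+1}/\alpha_n^2\to 1/\Delta_J$, from which (2) follows with $c,C$ arbitrarily close to $1/\Delta_J$ once $n\ge n_0$.

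\textbf{Main obstacle.} The most delicate step is the passage in Step 2 from $L^2$ smallness of the higher-frequency tail to smallness in the weighted sup norm $\|\cdot/\phi\|_\infty$ that actually controls $\alpha_n$: the $\sup$ in $(\star)$ is sensitive only to one-sided extremal values, so bulk $L^2$ control is not automatically enough. The resolution is to combine the smoothing of $T=(-\Delta_D)^{-1}$ with the uniform boundary behaviour of the $\phi_j$ via Hopf's lemma to obtain bounds on $\|\phi_j/\phi\|_\infty$ that are more than summable against the mode-wise geometric decay from Step 2, so that both the upper bound $\alpha_{n+1}\le C\alpha_n^2$ and its matching lower bound $\alpha_{n+1}\ge c\alpha_n^2$ can be read directly off the leading-order asymptotics produced by $(\star)$.
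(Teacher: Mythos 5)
Your Steps 2--3 are essentially the paper's own argument for assertion (2): expand in the eigenbasis, track the first nonzero orthogonal mode, use $\langle\phi_J,\phi\rangle=0$ (so $\phi_J$ changes sign) to get the matching lower bound, and use the spectral gap to suppress the higher modes; your master formula $(\star)$ is an equivalent repackaging of the paper's identity $\lambda^{(n+1)}-\lambda^*=\sup_x ((T-\lambda^*)v^{n+1})/v^{n+1}$, and the identification of the limit $1/\Delta_J$ is a nice sharpening of the paper's two-sided $\asymp$ bounds. Two caveats there: (a) the claimed geometric contraction of $\beta_j^{(n)}/\beta_J^{(n)}$ fails when $\lambda_j^*=\lambda_J^*$ (the ratio is then constant), so the leading object must be the fixed block combination $\sum_{\lambda_j^*=\lambda_J^*}\beta_j^{(0)}\phi_j$, exactly as the paper handles with its index $j_1$ and the point $x_0$; this is harmless but must be said, and it does not spoil the limit $1/\Delta_J$. (b) The upgrade from $L^2$ concentration to the weighted norm $\|\cdot/\phi\|_\infty$ needs quantitative growth of $\|\phi_j/\phi\|_\infty$ in $j$ (elliptic estimates plus Weyl plus Hopf) played against decay of the initial coefficients coming from smoothness of $v^0$ --- finiteness of each $\|\phi_j/\phi\|_\infty$ alone controls no infinite tail. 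You correctly flag this as the main obstacle, and your sketch of its resolution is at roughly the same level of detail as the paper's ``it is not difficult to check''.

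The genuine gap is in Step 1. The inequality $\|(T-\lambda^*)g_n/\phi\|_\infty\lesssim\|g_n\|_{L^2}$ is false as stated: the term $\lambda^*\,\sup_x|g_n(x)|/\phi(x)$ simply cannot be bounded by an $L^2$ norm (an $L^2$-small spike of $g_n$ near $\partial\Omega$, where $\phi\asymp\mathrm{dist}(\cdot,\partial\Omega)$, makes the ratio arbitrarily large), Schauder estimates require H\"older data, and even the smoothed part obeys only $|Tg_n(x)|\lesssim\|g_n\|_{L^2}\,\mathrm{dist}(x,\partial\Omega)^{\gamma}$ with $\gamma<1$ (or a logarithmic loss in $d=2$) from $H^2$ regularity or Green-function bounds, not the Lipschitz-to-the-boundary bound you need. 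Hence $\|g_n\|_{L^2}\to0$ does not by itself yield $\alpha_n\to0$; what is needed is uniform-in-$n$ weighted control of the iterates, i.e.\ precisely the hard estimate you deferred to Steps 2--3, so assertion (1) cannot be quoted before that work is done. The paper proves (1) by a cheaper and structurally different route which you should adopt or reproduce: by Lemma \ref{lem2.1} (monotonicity of the one-step growth factor, via commutation of the resolvents) one has $\sup_x((\lambda^{(n)}-T)^{-1}v^n)/v^n\le\sup_x((\mu-T)^{-1}v^0)/v^0<\infty$, where the finiteness involves only the \emph{fixed} function $v^0$ and follows from strong maximum principle/Hopf comparisons with $\mathrm{dist}(\cdot,\partial\Omega)$; consequently, if $\lambda^{(n)}\downarrow\mu>\lambda^*$, the decrements $\lambda^{(n)}-\lambda^{(n+1)}=\inf_x v^n/((\lambda^{(n)}-T)^{-1}v^n)$ stay bounded below by a positive constant, contradicting monotone boundedness. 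Alternatively, if you want to keep your $L^2$ scheme, you must first establish a uniform bound on $\|g_n/\phi\|_\infty$ (e.g.\ via the splitting $(\lambda-T)^{-1}=\lambda^{-1}(I+T(\lambda-T)^{-1})$ and the barrier bound $|Tw|\le\|w\|_\infty\,T1\lesssim\mathrm{dist}(\cdot,\partial\Omega)$), and only then pass to the limit.
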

\begin{proof}
(1) We show $\lambda^{(n)} \to \lambda^*$ as $n\to \infty$. We argue by contradiction. Suppose
$\lambda^{(n)} \to \mu>\lambda^*$.  By \eqref{2.13a}, we obtain
\begin{align}
\lambda^{(n)} -\lambda^{(n+1)} \ge \inf_{x\in \Omega} \frac {v^n (x) } { ( (\lambda^{(n)} -T)^{-1} v^n ) (x)}.
\end{align}
By an argument similar to the proof of Lemma \ref{lem2.1}, we have
\begin{align}
\sup_{x \in \Omega} 
\frac { ((\lambda^{(n)} - T)^{-1} v^n )(x) } {v^n(x)}
\le \sup_{x \in \Omega}
\frac { ((\lambda^{(n)} - T)^{-1} v^0 )(x) } {v^0(x)}.
\end{align}
Since $\lambda^{(n)} \ge \mu >\lambda^*$ and
\begin{align}
(\lambda^{(n)} - T)^{-1} - (\mu - T)^{-1} = -(\lambda^{(n)} -\mu) (\lambda^{(n)}-T)^{-1}
(\mu - T)^{-1},
\end{align}
we clearly have
\begin{align}
\sup_{x \in \Omega}
\frac { ((\lambda^{(n)} - T)^{-1} v^0 )(x) } {v^0(x)} 
\le \sup_{x \in \Omega}
\frac { ((\mu - T)^{-1} v^0 )(x) } {v^0(x)} \le \beta<\infty,
\end{align}
where $\beta>0$ is independent of $n$, and the last inequality follows a similar proof
of \eqref{2.14a}. It follows that $\lambda^n - \lambda^{n+1} \ge c_{\beta}>0$ for some
fixed constant $c_{\beta}>0$.  This clearly leads to a contradiction.

(2) 
Since $v^0$ is not a multiple of $\phi$, we may (after a simple normalization) write
\begin{align}
v^0 = \phi + \sum_{j=j_0}^{\infty} c_j^{(0)} \phi_j,
\end{align}
where $T \phi_j = \lambda_j^* \phi_j$, and $c_{j_0}^{(0)} \ne 0$ ($j_0\ge 2$) is the first nonzero
coefficient.  Here if $\lambda_j = \lambda_{j_0}$ for $j_0 \le j \le j_1$ (i.e. taking into account
the multiplicity)  we tacitly assume $c_{j_0}^{(0)}$ is the largest amongst $|c_{j}^{(0)}|$ for
$j_0\le j\le j_1$. With no loss we assume $c_{j_0}^{(0)} >0$.

The eigen-functions $\phi_j$ satisfy $T \phi_j = \lambda_j \phi_j$
with $\lambda_{j+1} \le \lambda_j$ for all $j\ge 2$. By a simple induction, we have
\begin{align}
v^{n} = \phi + \sum_{j=j_0}^{\infty} c_j^{(n)} \phi_j, 
\qquad c_j^{(n)} = c_j^{(0)} \prod_{l=0}^{n-1} \frac {\lambda^{(l)} -\lambda^*}{\lambda^{(l)}
-\lambda_j^*}.
\end{align}
By using the definition of $\lambda^{(n+1)}$, we obtain
\begin{align}
\frac {\lambda^{(n+1)} - \lambda^*}
{\lambda^{(n)} - \lambda^*}
= \sup_{x\in \Omega}
\frac { ((T-\lambda^*) v^{n+1})(x)} {v^{n+1}(x)}
=\sup_{x\in \Omega}
\frac{ \sum_{j=j_0}^{\infty} c_j^{(n+1)} (\lambda_j^*-\lambda^*) \phi_j }
{ \phi + \sum_{j=j_0}^{\infty} c_j^{(n+1) } \phi_j }.
\end{align}
It is not difficult to check that for $n$ sufficiently large, 
\begin{align}
&K_1\le \frac {\phi (x)+ \sum_{j=j_0}^{\infty} c_j^{(n+1)} \phi_j (x) } {\phi  (x)}
\le K_2, \quad \forall\, x \in \Omega;\\
&\| \frac{\sum_{j=j_0}^{\infty} c_j^{(n+1) } (\lambda_j^*-\lambda^*) \phi_j } {\phi } \|_{L^{\infty}(\Omega)}
\le c_{j_0}^{(n+1)} K_3,
\end{align}
where $K_i>0$, $i=1,2,3$ are constants independent of $n$.   These imply that
\begin{align}
\frac {\lambda^{(n+1)} - \lambda^*}
{\lambda^{(n)} - \lambda^*}
\le \; \mathrm{const} \cdot c_{j_0}^{(n+1)} .
\end{align}
On the other hand, we write
\begin{align}
  & \sum_{j=j_0}^{\infty} c_j^{(n+1)} (\lambda_j^*- \lambda^*) \phi_j \notag \\
=& (\lambda_{j_0}^* - \lambda^*) (\prod_{l=0}^n \frac{ \lambda^{(l)} -\lambda^*}
{\lambda^{(l)} - \lambda_{j_0}^*} )
\sum_{j=j_0}^{j_1}  c_j^{(0)} \phi_j  + \sum_{j>j_1} c_j^{(n+1)} (\lambda_j^*-\lambda^*) \phi_j.
\end{align}
Here we  $j_1\ge j_0$ accounts for the multiplicity of the eigen-value $\lambda^*_{j_0}$,
namely we have $\lambda_j^*= \lambda^*_{j_0}$ for $j_0\le j\le j_1$, and
$\lambda_j^*<\lambda_{j_0}^*$ for $j>j_1$.  Thanks to the spectral gap, the coefficients 
$c_j^{(n+1)}$ decays faster than $c_{j_0}^{(n+1)}$ for $j>j_1$.  Clearly we can fix some
$x_0 \in \Omega$ such that
\begin{align}
d_{x_0}:= \sum_{j=j_0}^{j_1} c_j^{(0)} \phi_j (x_0) >0.
\end{align}
This is indeed possible since 
\begin{align}
\| \sum_{j=j_0}^{j_1} c_j^{(0} \phi_j \|_{L^2(\Omega)}^2 
\ge (c_{j_0}^{(0)} )^2 >0, \quad \langle \sum_{j=j_0}^{j_1}
c_j^{(0)} \phi_j, \phi \rangle =0.
\end{align}
It follows that for $n$ sufficiently large
\begin{align}
\sup_{x\in \Omega}
\frac { \sum_{j=j_0}^{\infty} c_j^{(n+1)} (\lambda_j^*-\lambda^*) \phi_j (x)}
{ \phi (x)} \ge K_4 c_{j_0}^{(n+1)},
\end{align}
where $K_4>0$ is a constant independent of $n$.  Thus we have shown for 
$n$ sufficiently large:
\begin{align}
\frac{ \lambda^{(n+1)} - \lambda^*}{ \lambda^{(n)} -\lambda^*} 
\sim c_{j_0}^{(n+1)} \sim \prod_{l=0}^n \frac { \lambda^{(l)} - \lambda^*} { \lambda^{(l)}
-\lambda_{j_0}^*}.
\end{align}
The desired result follows easily.
\end{proof}

\section{Numerical experiments}
\subsection{Matrix case}

In this subsection, we first apply the aforementioned algorithm to compute the principal eigenvalue of primitive matrices. Let $A$ be a nonnegative primitive matrix. We shall implement the
 algorithm \ref{alg nonnegative} specified below.
\begin{algorithm}[h]
	\caption{}
	\label{alg nonnegative}
	\begin{algorithmic}
		
		\STATE {
			\begin{enumerate}[\textbf{Step} 1]
				\item
				Choose the initial vector $v^{0}>0$ and compute
				\begin{equation}\label{alg1 z0}
				\lambda^{(0)}= \max_{1\leqslant i \leqslant N} \frac{[A v^{0}]_i}{v^{0}_i}
			    \end{equation}
				
				\item
				{For $n=1,2,\dots$}, solve the linear equation
				\begin{equation}\label{alg1 system}
				\Big(\lambda^{(n-1)}I-A \Big)\omega^{n}=v^{n-1}
			    \end{equation}
				and calculate (below $\|\cdot \|$ denotes the usual $l^2$ norm) 
				\begin{align}
				v^{n}=\frac {\omega^{n}}{\|\omega^{n}\|}.
				\end{align}
				
				\
				Update $\lambda^{(n)}$ with
				\begin{equation}\label{alg1 zn}
				\lambda^{(n)}= \max_{1\leqslant i \leqslant N} \frac{[A v^{n}]_i}{v^{n}_i}.
			    \end{equation}
				
				\item
				Enforce the stopping criterion and output 
				$ (\lambda^{(n)}, \; v^{n})$.
		\end{enumerate}}
	\end{algorithmic}
\end{algorithm}

We take $A=H$ or $B$, where $H$ is the well-known Hilbert matrix with entries $H_{ij}=(i+j-1)^{-1}$, $1\le i,j\le N$,  and $B$ is a nonnegative random symmetric tridiagonal matrix
\begin{equation}
	B= \begin{pmatrix}
	a_1 &   b_1&   0& 0&    \cdots\\
	b_1&a_2&b_2&0 &\cdots \\
	0&b_2&a_3&b_3&\cdots \\
	\vdots &\vdots & \vdots&\ddots&\vdots\\
	0&0&0&b_{N-1}&a_N\\
\end{pmatrix}.
\end{equation}
In the above $a_i\sim U(0,2)$ are (iid) uniformly randomly distributed  on the interval $(0,\,2)$, and $b_i\sim U(0,1)$ are (iid) uniformly randomly distributed on the interval $(0,\,1)$ . We take $N=1000$. We shall compute the principal eigenvalue of the matrix $A$. 
\begin{rem}
The Hilbert matrix naturally arises from the least square approximation of general $L^2$ functions on $(0,1)$ via the polynomial basis $\{ 1, x, \cdots, x^{N-1} \}$, namely
\begin{align} 
\mathcal E =\min_{a_0, \cdots, a_{N-1} } \int_0^1 (f(x) -q_N(x) )^2 dx,
\end{align}
where $q_N(x) =\sum_{j=0}^{N-1} c_j x^j$. The critical point equations  yield
\begin{align}
\sum_{j=1}^N c_{j-1} \int_0^1 x^{i+j-2} dx  =\underbrace{\int_0^1 x^{i-1} f(x) dx}_{=:f_i}.
\end{align}
This in turn leads to the linear system of equations $H c= f$.  A well-known issue is 
that $H$ is ill-conditioned.

\end{rem}

\textbf{Stopping criterion.} 
In light of the Collatz-Wielandt bound, we can take 
\begin{equation}
	\text{SC1}=\max_{i} \frac{A v^{n}}{v^{n}} - \min_{i} \frac{A v^{n}}{v^{n}}< \varepsilon
	\label{mat crit 1}
\end{equation}
as a stopping criterion, where $\varepsilon$ is the tolerance threshold. However, for tridiagonal matrices, the principal eigenvector sometimes admit extremely small components and the 
LHS of  \eqref{mat crit 1} can remain $O(1)$ when the numerical iterates are in the vicinity of the desired eigen-pair (See Table \ref{tab matrix}). In lieu of SC1, we enforce the following  stopping criterion
\begin{equation}
	\label{3.7a}
	|\lambda^{(n+1)}-\lambda^{(n)}|<\varepsilon.
\end{equation}
%

\begin{rem}
	We explain why the LHS of \eqref{mat crit 1} can stay $O(1)$ whilst $(\lambda^{(n)},\,v^n)$ is close to $(\lambda_*,\,v)$, where $(\lambda_*,\,v)$ is the principal eigen-pair. Roughly speaking
	\begin{equation}
		r^{n}_i=\frac{(Av^{n})_i}{v^{n}_i}=\sum_{j} a_{ij}\frac{v^{n}_j}{v^{n}_i}
	\end{equation}
which clearly depends on the ratios $\frac{v^{n}_j}{v^{n}_i}$. In the scenario where $v=(v_i)$ has vastly different components, the ratio $\frac{v^{n}_j}{v^{n}_i}$ can become very large and $\left|\frac{v^{n}_j}{v^{n}_i}-\frac{v_j}{v_i}\right|\gtrsim 1$ whilst $v^n$ is close to $v$. As a result the round-off errors accumulated in intermediate computations could spoil the convergence of SC1. In general, it will take more iterations for SC1 to converge than \eqref{3.7a}.
\end{rem}

\textbf{Number of iterations.} We take the initial vector $v^0=(1,\,\cdots,\,1)^T$. The exact principal eigenvalue is computed by the function \texttt{eig()} of MATLAB. In general it takes $6$ to  $8$ iterations for our
algorithm to reach the stopping criterion (See Table \ref{tab matrix}).

\textbf{Quadratic convergence. }
As we know the definition of quadratic convergence is
\begin{equation}
	\frac{|\lambda^{(n+1)}-\lambda|}{|\lambda^{(n)}-\lambda|^2} \to c>0
\end{equation}
as $n\rightarrow \infty$. Thus \footnote{Denote $a_{n+1}=|\lambda^{n+1}-\lambda|$. Clearly one expects $a_{n+1}=(c+\varepsilon_n)a_{n}^2$ $\Rightarrow$ $\log(a_{n+1})=2\log(a_n)+\log(c+\varepsilon_n)$ $\Rightarrow$ $\frac{\log(a_{n+1})-\log(a_n)}{\log(a_n)-\log(a_{n-1})}\rightarrow 2$}
\begin{equation}
	\frac{\log(|\lambda^{(n+1)}-\lambda|)-\log(|\lambda^{(n)}-\lambda|)}{\log(|\lambda^{(n)}-\lambda|)-\log(|\lambda^{(n-1)}-\lambda|)} \rightarrow 2
	\label{test matrix order}
\end{equation}
as $n\rightarrow \infty$. As such we shall appeal to the above formula to test the convergence rate (See Table \ref{tab matrix}). Reassuringly in Table \ref{tab matrix} the  quadratic convergence 
takes place when $v^{n}$ is sufficiently close to the exact eigenvector. 
\begin{rem}
One should note that in Table \ref{tab matrix}, the row corresponding to the iteration step 8  belongs
to the realm of the machine error.  As such the order computed in this row which carries the numerical
value  $1.193$ should be treated as an outlier. 
\end{rem}
\textbf{Comparison with Rayleigh quotient.}
For comparison 
we apply the usual Rayleigh quotient iteration to aforementioned two matrices. We take $v^0=$\texttt{ones(N,1), rand(N,1)},
with $N=100$ or $N=1000$. In yet other words $v^0$ consists of identical ones, or iid random variables on $(0,1)$. Quite interestingly, it is observed that none of them converge to the principal eigenvalue. This corroborates very well with the folklore fact that the Rayleigh quotient iteration usually converges locally.

On the other hand, if we consider a small Hilbert matrix $H$ with $N=50$ and $v^0=1$, the cubic convergence to the principal eigen-pair takes place within $6$ steps. 
See the last two columns of Table \ref{tab matrix}.  We shall use \eqref{3.7a} as the stopping 
criterion. 

\begin{algorithm}[h]
	\caption{Rayleigh quotient iteration}
	\label{alg rayl nonnegative}
	\begin{algorithmic}
		
		\STATE {
			\begin{enumerate}[\textbf{Step} 1]
				\item
				Choose the initial vector $v^{0}>0$ and compute
				\begin{equation}\label{alg rl z0}
					\lambda^{(0)}= \frac{(v^0)^TAv^0}{(v^0)^Tv^0}
				\end{equation}
				
				\item
				{For $n=1,2,\dots$}, solve the linear equation
				\begin{equation}\label{alg rl system}
					\Big(\lambda^{(n-1)}I-A \Big)\omega^{n}=v^{n-1}
				\end{equation}
				and calcuate 
				\begin{align}
					v^{n}=\frac {\omega^{n}}{\|\omega^{n}\|}.
				\end{align}
				
				\
				Update $\lambda^{(n)}$ with
				\begin{equation}\label{alg rl zn}
					\lambda^{(n)}= (v^n)^TAv^n.
				\end{equation}
				
				\item
				Enforce the stopping criterion and output 
				$ (\lambda^{(n)}, \; v^{n})$.
		\end{enumerate}}
	\end{algorithmic}
\end{algorithm}

\begin{table}[htb]
	\centering  %
	\begin{tabular}{|c|c|c|c|c|c|c|c|c|}  
		\hline
		& \multicolumn{3}{|c|}{Tridiagonal} &\multicolumn{3}{|c|}{Hilbert matrix}&\multicolumn{2}{|c|}{Hilbert matrix, Rayleigh}\\ \hline
		Iterations&$\lambda^{(n)}-\lambda$&Order&SC1&$\lambda^{(n)}-\lambda$&Order&SC1&$|\lambda^{(n)}-\lambda|$&Order\\ \hline
		1&4.292e-02& --  &2.994&0.993    &--   &4.077&0.2915&--\\
		2&4.173e-03& --  &2.771&0.441    &--   &2.517&0.1941&--\\
		3&5.768e-05&1.837&2.658&0.160    &1.246&1.382&4.0983e-02&3.830\\
		4&1.571e-08&1.917&2.522&3.627e-02&1.473&0.409&1.8191e-04&3.482\\
		5&8.449e-16&2.039&2.322&2.611e-03&1.766&2.638e-02&1.3255e-11&3.033\\
		6&1.408e-16&0.107&2.228&1.482e-05&1.965&1.213e-04&8.5554e-16&0.587\\
		7&--       & --  & --  &4.689e-10&2.003&3.347e-09&--&--\\
		8&--       & --  & --  &1.999e-15&1.193&1.110e-14&--&--\\
		\hline			
	\end{tabular}
	\caption{The convergence error, order, stopping criterion  
	 v.s. iteration step.}
	\label{tab matrix}
\end{table}

\subsection{The Dirichlet Laplacian case}

In this subsection, we apply aforementioned algorithm to compute the principal eigenvalue  of $T=(-\Delta_D)^{-1}$, where $-\Delta_D$ is Dirichlet Laplacian operator. We shall implement the
algorithm \ref{alg pde nonnegative} specified below. To expedite the computation, we shall employ
the following identity without explicit mentioning:
\begin{align}
(\lambda-T)^{-1}=(-\Delta_D)(\lambda (-\Delta_D)-1)^{-1}.
\end{align}
 In yet other words, we shall employ $(-\Delta_D)(\lambda (-\Delta_D)-1)^{-1}$ to compute $w^{n+1}$.

\begin{algorithm}
	\caption{}
	\label{alg pde nonnegative}
	\begin{algorithmic}
		
		\STATE {
			\begin{enumerate}[\textbf{Step} 1]
				\item
				Choose the initial vector $v^{0}>0$ and compute
				\begin{equation}
				\label{alg1 pde z0}
				\lambda^{(0)}= \sup_{x \in \Omega} \frac{Tv^{0}}{v^{0}}
				\end{equation}
				
				\item
				{For $n=0,\,1,\,2,\,\dots$}, solve the linear equation
				\begin{equation}
				\label{alg1 pde system}
				\omega^{n+1}=\Big(\lambda^{(n)}I-T \Big)^{-1}v^{n}
				\end{equation}
				and calcuate 
				\begin{align}
					v^{n}=\frac {\omega^{n}}{\|\omega^{n}\|}.
				\end{align}

				Update $\lambda^{(n+1)}$ with
				\begin{equation}
				\label{alg1 pde zn}
				\lambda^{(n+1)}= \sup_{x \in \Omega} \frac{Tv^{n+1}}{v^{n+1}},
			    \end{equation}
		        or
		        \begin{equation}
		        	\lambda^{(n+1)}=\lambda^{(n)}-\inf_{x\in \Omega} \frac{v^n}{w^{n+1}}
		        \end{equation}
				
				\item
				Enforce the stopping criterion and output 
				$ (\lambda^{(n)}, \; v^{n})$.
		\end{enumerate}}
	\end{algorithmic}
\end{algorithm}

\textbf{Domain.} We take the two dimensional domain $\Omega$ as specified in Figure \ref{domain}. This very nice example is take from \cite{LM07} (see Section 4.1 therein). 
\begin{figure}[htbp]
	\begin{center}
		\includegraphics[width=5cm]{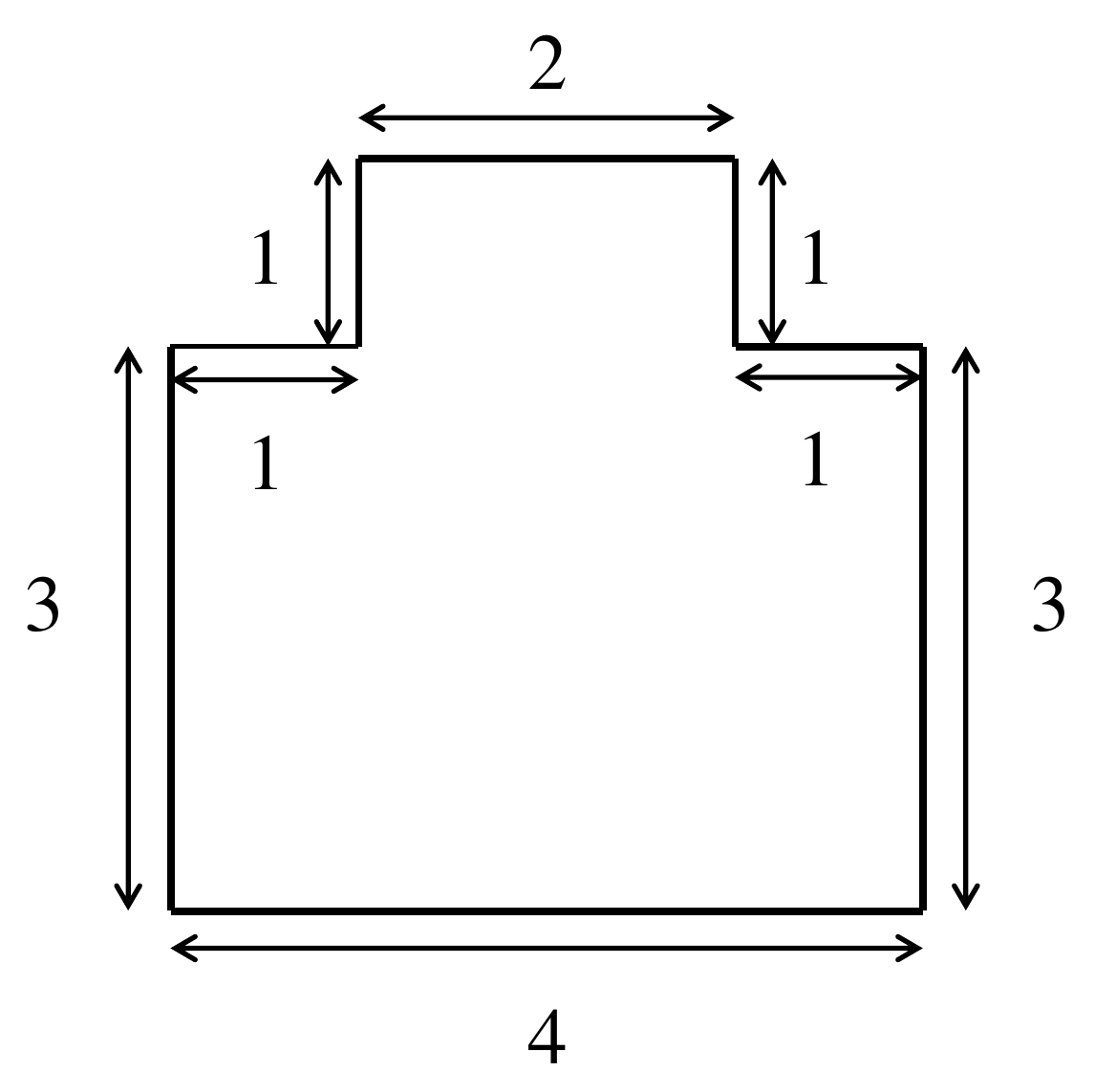}
	\end{center}
	\vspace{-2em}
	\caption{ The domain $\Omega$.}
	\label{domain}
\end{figure}

\textbf{Discretiztion.} We use the finite difference method to compute the Dirichlet Laplacian operator. Denote $T_h=(-\Delta)_h^{-1}$, where $\Delta_h$ is the discrete Laplacian operator with $h=h_x=h_y$.  We use the standard 5-stencil scheme:
\begin{equation}
	(-\Delta)_h u=-\frac{u_{i-1,j}+u_{i+1,j}-4u_{i,j}+u_{i,j-1}+u_{i,j+1}}{h^2}.
\end{equation}
We shall compute the cases $h=\,\frac{1}{4},\,\frac{1}{6},\,\frac{1}{10}\,\frac{1}{16},\,\frac{1}{25},\,\frac{1}{50}$. Denote the principal eigenvalue of $T_h$ as $\lambda_h$. For simplicity we take the exact principal eigenvalue $\lambda_*$ of $T$ on domain $\Omega$ as the one corresponding to the solution computed by MATLAB/\texttt{pdetool} with 2971838 nodes, i.e. $\lambda_*\approx 0.676147$. Denote the discrete solution of $v^{n}$ and $w^{n}$ as 
$v_h^{n}$ and $w_h^{n}$ respectively. 

\begin{rem}
	By using the \texttt{pdetool} package from MATLAB, we can compute the first eigenvalue of $-\Delta_D$ as a function of the number of nodes in the mesh. The table below collects the values of $\lambda_{(-\Delta_D)}$ and $\lambda_*=\frac{1}{\lambda_{(-\Delta_D)}}$. Here $\lambda_{(-\Delta_D)}$ denotes the eigenvalue
of $-\Delta_D$.
	\begin{table}[!h]
		\centering  %
		\begin{tabular}{|c|c|c|c|c|c|c|c|c|}  
			\hline
			\# of nodes  &168& 698 & 2941&12597&49660&218600&764685& 2971838\\ \hline
			$\lambda_{(-\Delta_D)}$&1.4888& 1.48278 &1.48015  &1.47941&1.479137&1.479022&1.478984&1.478967\\
			$\lambda_*$            &0.67164&0.67441 &0.67560  &0.67594&0.676069&0.676122&0.676139&0.676147\\
			\hline			
		\end{tabular}
		\caption{Estimate of $\lambda_{(-\Delta_D)}$ and $\lambda_*$ by Matlab/\texttt{pdetool} v.s. \# of nodes.}
		\label{lambda pde tool} 
	\end{table}
\end{rem}

\textbf{Stopping criterion. } We take the stopping criterion to be 
\begin{equation}
	\max_{i} \frac{T_h v_h^{n}}{v_h^{n}} - \min_{i} \frac{T_h v_h^{n}}{v_h^{n}}< \varepsilon,
	\label{crit 1}
\end{equation}
or equivalently, 
\begin{equation}
	\max_{i} \frac{v_h^{n}}{w_h^{n+1}} - \min_{i} \frac{v_h^{n}}{w_h^{n+1}} < \varepsilon.
	\label{crit 2}
\end{equation}
\begin{rem}
	We should mention that in Algorithm \ref{alg pde nonnegative}, the stopping criterion \eqref{crit 1} $\&$ \eqref{crit 2} are equivalent. Notice that $v^n_h=w^n_h/\Vert w^n_h\Vert$ and
	$v^{n}_h=(\lambda^{(n)}_h-T_h) w^{n+1}_h$, then
	\begin{equation}
		\lambda^{n}_h-\frac{v^{n}_h}{w^{n+1}_h}=\frac{\lambda^{n}_h w^{n+1}_h-(\lambda^{(n)}_h-T_h)w^{n+1}_h}{w^{n+1}_h}=\frac{T_h v_h^{n+1}}{v_h^{n+1}}.
	\end{equation}
    This means
    \begin{equation}
    	\max_{i} \frac{T_h v_h^{n+1}}{v_h^{n+1}} - \min_{i} \frac{T_h v_h^{n+1}}{v_h^{n+1}}=\max_{i} \frac{v_h^{n}}{w_h^{n+1}} - \min_{i} \frac{v_h^{n}}{w_h^{n+1}},
    \end{equation}
    whence the result.
\end{rem}
\vspace{1cm}

In general the eigenvalues  $\lambda_h$ exhibits a subtle dependence on the mesh size 
parameter $h$. 
By computational wisdom 
the order of $\left| \lambda_*-\lambda_h \right|$ appears to be no more than $2$, {\it i.e.} $\left| \lambda_*-\lambda_h \right|=O(h^\alpha)$ for some $0<\alpha \leqslant 2$. To extract the numerical value of $\alpha$ we employ the formula
\begin{equation}
	\label{3.23a}
	\alpha \approx \frac{\log(\left| \lambda_*-\lambda_{h_1} \right|)-\log(\left| \lambda_*-\lambda_{h_2} \right|)}{\log(h_1)-\log(h_2)}.
\end{equation}
(See Table \ref{lambda error}.) 

The exact $\lambda_h$ is computed by using power method ($v_h^{n+1}=T_h v_h^{n}$) until $\varepsilon<1e-14$ in $(\ref{crit 1})$, {\it i.e.}
\begin{equation}
 \max_{i} \frac{T_h v_h^{n}}{v_h^{n}} - \min_{i} \frac{T_h v_h^{n}}{v_h^{n}}< 10^{-14}.
\end{equation}
According to (\ref{1.5a}) we know that
\begin{equation}
 \min_{i} \frac{T_h v_h^{n}}{v_h^{n}}<\lambda_h<\max_{i} \frac{T_h v_h^{n}}{v_h^{n}}.
\end{equation}
Take $\lambda_h\approx \max_{i} \frac{T_h v_h^{n}}{v_h^{n}}$, 
we obtain the  value of $\lambda_h$ with $10^{-14}$ precision.
\begin{table}[htb]
	\centering  %
	\begin{tabular}{|c|c|c|c|}  
		\hline
		h &$\lambda_h$& Error & Order \\ \hline
		1/4&0.670972 & 5.174e-03 & --  \\
		1/6&0.672725 & 3.422e-03 &1.02\\
		1/10&0.674231& 1.916e-03 &1.13\\
		1/16&0.675061& 1.086e-03 &1.20\\
		1/25&0.675525& 0.6213e-03&1.25\\
		1/50&0.675893& 0.2537e-03&1.29\\
		\hline			
	\end{tabular}
	\caption{The error $\left| \lambda_*-\lambda_{h} \right|$ and its order}
	\label{lambda error} 
\end{table}

\textbf{Iteration steps.} We take initial vector $v^0=T1$, {\it i.e.}  solving 
$-\Delta v^0=1$ with the Dirichlet boundary condition. We shall use two different tolerance error thresholds $\varepsilon=1e-14$ or $\varepsilon=\,h^2/10$. In Table \ref{tab step}, the second and third row correspond to the case $\varepsilon=1e-14$, and the fourth and fifth correspond to \footnote{In view of \eqref{3.23a}, it is natural to take the error tolerance threshold to be $\frac{1}{10}h^2$.} $\varepsilon=h^2/10$. The error is  computed as $|\lambda_h-\lambda_h^{(n)}|$. In general it is observed that the convergence takes place within $2$ to  $4$ steps. An interesting observation is that the number of iterations remains  almost independent of the mesh size parameter $h$.

\begin{table}[htb]
	\centering  %
	\begin{tabular}{|c|c|c|c|c|c|c|c|}  
		\hline
		&h&$1/4$&$1/6$&$1/10$&$1/16$&$1/25$&$1/50$\\ \hline
		Criterion (\ref{crit 2}) &Iterations& 4     & 4   &4& 4   &4&4 \\ \cline{2-8}
		$\varepsilon=$1e-14&Error& 2.22e-15 &9.99e-16&1.84e-14&9.38e-14&7.62e-14&7.63e-13\\ \hline
		Criterion (\ref{crit 2}) &Iterations& 2   &2  &2& 2  &3&3 \\ \cline{2-8}
		$\varepsilon=h^2/10$&Error& 6.57e-05 &6.64e-05&6.65e-05&6.63e-05&8.48e-09&8.46e-09\\ \hline
	\end{tabular}
	\caption{Number of iterations and errors of the proposed method.}
	\label{tab step}
\end{table}

\textbf{Quadratic convergence.} We take $h=\frac{1}{6},\,\frac{1}{16},\,\frac{1}{50}$ to test the convergence rate. The stopping criterion is (\ref{crit 2}) with tolerance threshold$\varepsilon=1e-14$. It is observed that $\lambda_h^{(n)}$ converges to $\lambda_h$ quadraticaly. 
We employ the empirical formula
\begin{equation}
	\frac{\log(|\lambda_h^{(n+1)}-\lambda_h|)-\log(|\lambda_h^{(n)}-\lambda_h|)}{\log(|\lambda_h^{(n)}-\lambda_h|)-\log(|\lambda_h^{(n-1)}-\lambda_h|)}
	\label{test order}
\end{equation}
to compute the order of convergence.
\begin{table}[htb]
	\centering  %
	\begin{tabular}{|c|c|c|c|c|c|c|}  
		\hline
		\hline
		& \multicolumn{2}{|c|}{$h=1/6$} &\multicolumn{2}{|c|}{$h=1/16$}&\multicolumn{2}{|c|}{$h=1/50$}\\ \cline{2-7}
		&error&order&error&order&error&order\\ \hline
		1& 5.903e-03     & --   & 5.921e-03  & --   &  5.927e-03  &--  \\
		2& 6.639e-05     & --   &6.633e-05& --   &6.635e-05&-- \\
		3& 8.557e-09 &1.996&8.481e-09&1.996&8.467e-09&1.996\\
		4& 9.992e-16 &1.782&9.381e-14&1.273&7.627e-13&1.039\\
		\hline			
	\end{tabular}
	\caption{Order of convergence of the first four steps. The fourth step has reached the machine error, and the corresponding result is an outlier.}
	\label{tab order}
\end{table}

\textbf{Comparison with Rayleigh quotient.} As is well known, the standard Rayleigh quotient iteration has spectacular cubic convergence, if the initial condition is  sufficient close to the target eigen-pair. 

For comparison we take two different initial conditions $v^0=1$ or $v^0=T1$ to test the cubic convergence. Since the Rayleigh quotient does not preserve the positivity of $v^n$ in general, we shall enforce the stopping criterion 
\begin{align}
	\label{3.25a}
	|\lambda^{(n)}-\lambda_*|<\varepsilon.
\end{align} 
When we implement \eqref{3.25a} for the discretized Dirichlet Laplacian, the corresponding stopping criterion takes the form
\begin{align}
	\label{3.26a}
	|\lambda^{(n)}_h-\lambda_h|<\varepsilon.
\end{align} 

\begin{algorithm}[h]
	\caption{Rayleigh quotient iteration}
	\label{alg2 rayl nonnegative}
	\begin{algorithmic}
		
		\STATE {
			\begin{enumerate}[\textbf{Step} 1]
				\item
				Choose the initial vector $v^{0}>0$ and compute
				\begin{equation}\label{alg2 rl z0}
					\lambda^{(0)}= \frac{(v^0,\,Tv^0)}{(v^0,\,v^0)}
				\end{equation}
				
				\item
				{For $n=1,2,\dots$}, solve the linear equation
				\begin{equation}\label{alg2 rl system}
					\Big(\lambda^{(n-1)}I-T \Big)\omega^{n}=v^{n-1}
				\end{equation}
				and calcuate 
				\begin{align}
					v^{n}=\frac {\omega^{n}}{\|\omega^{n}\|}.
				\end{align}
				
				\
				Update $\lambda^{(n)}$ with
				\begin{equation}\label{alg2 rl zn}
					\lambda^{(n)}= (v^n,\,Tv^n)
				\end{equation}
				
				\item
				Enforce the stopping criterion and output 
				$ (\lambda^{(n)}, \; v^{n})$.
		\end{enumerate}}
	\end{algorithmic}
\end{algorithm}

\begin{table}[htb]
	\centering  %
	\begin{tabular}{|c|c|c|c|c|c|}  
		\hline
		\multicolumn{2}{|c|}{ }&\multicolumn{2}{|c|}{$v^0=1$}&\multicolumn{2}{|c|}{$v^0=T1$}\\ \hline
		\multicolumn{2}{|c|}{h}&$1/16$&$1/50$&$1/16$&$1/50$\\ \hline
		Rayleigh&Iterations& 4     & 4   &2& 2   \\ \cline{2-6}
		quotient&Error& 6.66e-16 &1.33e-15&5.55e-16&1.22e-15\\ \hline
		Our &Iterations& 5   &5  &4& 4  \\ \cline{2-6}
		algorithm&Error& 3.11e-14 &5.96e-13&2.42e-14&3.30e-13\\ \hline
	\end{tabular}
	\caption{The comparison of Rayleigh quotient and our method. The error is $|\lambda_h-\lambda_h^{(n)}|$. The stopping criterion is $|\lambda_h^{(n)}-\lambda_h|<1e-12$.}
	\label{tab ral step}
\end{table}

\begin{table}[htb]
	\centering  %
	\begin{tabular}{|c|c|c|}  
		\hline
		\hline
		& \multicolumn{2}{|c|}{$h=1/50$} \\ \cline{2-3}
		&Error&Order\\ \hline
		1& 8.499e-02     & --     \\
		2& 2.9410e-03     & --   \\
		3& 8.025e-08 &3.124\\
		4& 1.332e-15 &1.704\\
		\hline			
	\end{tabular}
	\caption{Convergence order of Rayleigh quotient iteration. Here the initial condition is $v^0=1$.}
	\label{tab ral order}
\end{table}

From Table \ref{tab ral order}, it is observed that the iterates appear to converge to the principal eigenvalue with cubic convergence.


\begin{thebibliography}{m}

\bibitem{LM07}
Antoine Lejay and Sylvain Maire. 
Computing the principal eigenvalue of the Laplace operator by a stochastic method.
Mathematics and Computers in Simulation 73(6): 351--363 (2007)

\bibitem{G00}
Gene H. Golub, Henk A. van der Vorst.
Eigenvalue computation in the 20th century.
Journal of Computational and Applied Mathematics 123 (2000) 35--65.



\bibitem{T97}
Trefethen, Lloyd N., and David Bau III. Numerical linear algebra. Vol. 50. Siam, 1997.


\bibitem{LW20}
Desheng Li, Ruijing Wang.
New schemes for solving the principal eigenvalue problems of 
Perron-like matrices via polynomial approximations of matrix exponentials.
arXiv: 2008.06850v1




\bibitem{At}
P.V.At, Diagonal transformation methods for computing the maximal eigenvalue
  and eigenvector of a nonnegative irreducible matrix, Linear Algebra Appl. 148
  (1991) 93--123.
\newblock \href {http://dx.doi.org/10.1016/0024-3795(91)90089-F}
  {\path{doi:10.1016/0024-3795(91)90089-F}}.

\bibitem{BaiZJ}
Z.J.Bai, J.~Demmel, Using the matrix sign function to compute invariant
  subspaces, SIAM J. Matrix Anal. Appl. 19~(1) (1998) 205--225.
\newblock \href {http://dx.doi.org/10.1137/S0895479896297719}
  {\path{doi:10.1137/S0895479896297719}}.

\bibitem{Bra}
A.A.Brauer, A method for the computation of the greatest root of a nonnegative
  matrix, SIAM J. Numer. Anal. 3~(4) (1996) 546--569.
\newblock \href {http://dx.doi.org/10.1137/0703047}
  {\path{doi:10.1137/0703047}}.

\bibitem{Buns}
W.~Bunse, A class of diagonal transformation methods for the computation of the
  spectral radius of a nonnegative irreducible matrix, SlAM J. Numer. Anal.
  18~(4) (1981) 693--704.
\newblock \href {http://dx.doi.org/10.1137/0718046}
  {\path{doi:10.1137/0718046}}.

\bibitem{FH}
M.~Fasi, N.~J. Higham, An arbitrary precision scaling and squaring algorithm
  for the matrix exponential, SIAM J. Matrix Anal. Appl. 40~(4) (2019)
  1233--1256.
\newblock \href {http://dx.doi.org/10.1137/18M1228876}
  {\path{doi:10.1137/18M1228876}}.

\bibitem{Ford}
W.~Ford, Numerical Linear Algebra with Applications: Using MATLAB, Academic
  Press, 2015.

\bibitem{P74}
B.N. Partlett, The Rayleigh quotient iteration and
some Generalizations for nonnormal matrices. Math. Comp. vol. 28, No. 217, 679--693
(1974).


\bibitem{Wat1}
D.S.Watkins, Fundamentals of Matrix Computations (2nd ed.), Wiley-Inter
  Science, NewYork, 2002.
\newblock \href {http://dx.doi.org/10.1002/0471249718}
  {\path{doi:10.1002/0471249718}}.

\bibitem{Wat2}
D.S.Watkins, The Matrix Eigenvalue Problem: GR and Krylov Subspace Methods,
  SIAM, Philadelphia, 2007.
\newblock \href {http://dx.doi.org/10.1137/1.9780898717808}
  {\path{doi:10.1137/1.9780898717808}}.

\bibitem{Wat3}
D.S.Watkins, The {QR} algorithm revisited, SIAM Rev 50~(1) (2008) 133--145.
\newblock \href {http://dx.doi.org/10.1137/060659454}
  {\path{doi:10.1137/060659454}}.

\bibitem{Wen}
C.M.Wen, T.Z.Huang, A modified algorithm for the perron root of a nonnegative
  matrix, Appl. Math. Comput. 217~(9) (2011) 4453--4458.
\newblock \href {http://dx.doi.org/10.1016/j.amc.2010.10.048}
  {\path{doi:10.1016/j.amc.2010.10.048}}.
  
\bibitem{Chen}
MF. Chen, Global algorithms for maximal eigenpair. Front. Math. China 12, 1023–1043 (2017). 
\newblock \href{https://doi.org/10.1007/s11464-017-0658-8}
{\path{doi:10.1007/s11464-017-0658-8}}.

\bibitem{TangYang}
T. Tang and J. Yang, Finding the Maximal Eigenpair for a Large, Dense, Symmetric Matrix based on Mufa Chen's Algorithm, Commun. Math. Res. 36(1) (2020)  93-112.
\newblock \href{https://global-sci.org/intro/article_detail/cmr/15791.html}
{\path{10.4208/cmr.2020-0005}}


\end{thebibliography}
\end{document}